\newtheorem{theorem}{Theorem}[section]
\newtheorem{lemma}[theorem]{Lemma}
\newtheorem{corollary}[theorem]{Corollary}
\newtheorem{remark}[theorem]{Remark}
\newtheorem{definition}[theorem]{Definition}
\newtheorem{assumption}[theorem]{Assumption}
\def\mcE{\mathcal{E}}
\def\R{{\mathbbm R}}
\def\<{\langle}
\def\>{\rangle}
\numberwithin{equation}{section}
\begin{document}
\title[On Kigami's conjecture of the embedding $\mathcal{W}^p(K)\subset C(K)$]{On Kigami's conjecture of the embedding $\mathcal{W}^p(K)\subset C(K)$}

\author{Shiping Cao}
\address{Department of Mathematics, University of Washington, Seattle, WA 98195, USA}
\email{spcao@uw.edu}
\thanks{}

\author{Zhen-Qing Chen}
\address{Department of Mathematics, University of Washington, Seattle, WA 98195, USA}\email{zqchen@uw.edu}
\thanks{}

\author{Takashi Kumagai}
\address{Department of Mathematics, Waseda University, Tokyo 169-8555, Japan}
\email{t-kumagai@waseda.jp}
\thanks{} 

\subjclass[2020]{Primary 31E05}

\date{}

\keywords{{S}ierpi\'{n}ski carpets, Dirichlet form, Mosco convergence, weak convergence, energy measure, conductance, harmonic function, extension operator}

\maketitle

\begin{abstract}
Let $(K,d)$ be a connected compact metric space and $p\in (1, \infty)$. 
Under the assumption of \cite[Assumption 2.15]{Ki2} and the conductive 
$p$-homogeneity, 
we show that $\mathcal{W}^p(K)\subset C(K)$ holds if and only if $p>\operatorname{dim}_{AR}(K,d)$, where $\mathcal{W}^p(K)$ is Kigami's $(1,p)$-Sobolev space and  $\operatorname{dim}_{AR}(K,d)$ is the Ahlfors regular dimension.
\end{abstract}

\section{Introduction}\label{sec1}

The classical Sobolev spaces $W^{1, p}(\R^d)$ on $\R^d$ are vector spaces   consisting of  
$\mathbb L^p$-integrable functions whose distributional derivatives are  also $\mathbb L^p$-integrable. They play a fundamental role in analysis and partial differential equations (PDEs in abbreviation). 
 For instance, many important PDEs have weak solutions in suitable Sobolev spaces even though they may not have classical or strong solutions. 
A  classical Sobolev embedding theorem asserts that  $W^{1,p}(\mathbb R^d)\subset C(\mathbb R^d)$ if and only if $p>d$.
 
 \medskip
 
 Recently, there are many progresses in the  
analysis on metric measure spaces.  One major endeavor is to define and study 
 Sobolev spaces on non-smooth metric measure spaces.  
  A natural approach to define Sobolev spaces on metric measure spaces is to use the upper gradient, which corresponds to local Lipschitz constants for Lipschitz functions on $\R^d$;   See Haj\l asz
    \cite{H}, Cheeger \cite{Cheeger} and Shanmugalingam \cite{Shan} for pioneering works in this direction. 
   However, on 
 most  fractals,  including generalized Sierpi\'nski carpets and high dimensional Sierpi\'nski gaskets, recent studies \cite{KM,KM2} by Kajino and Murugan 
 shows that such an approach does not work; see \cite[Pages  2-3]{Ki2} for details. 
In fact, on fractals the Sobolev spaces defined in terms of  upper gradients are typically the same as the $\mathbb L^p$-spaces  due to \cite[Proposition 7.1.33]{HKST} (see also \cite[Remark 11.7]{MS} for more details in the case of Sierpinski carpets).  
Motivated by the graph approximation  approach of Brownian motion 
 on strongly recurrent Sierpi\'nski carpets 
by Kusuoka and Zhou \cite{KZ}, recently Kigami \cite{Ki2} defined a function space $\mathcal{W}^p(K)$, which is the counterpart of $W^{1,p}(\R^d)$,  
 on a class of compact metric measure spaces  $(K, d)$ in terms of  the scaling limit of $p$-energies defined on an increasing sequence of
   discrete graph approximations of the metric space $(K, d)$. In the same paper, it is conjectured \cite[Conjecture on page 9]{Ki2}   that  
   $\mathcal{W}^p(K)$ can be embedded into  the space of continuous functions on $K$ if and only if $p$ is  strictly larger than the 
    Ahlfors regular conformal dimension of $(K,d)$.
  The purpose of  this paper is to settle  this  conjecture affirmatively.

Let us very briefly overview the history of  analysis on infinitely ramified fractals. 
On the Sierpi\'nski carpets, Barlow and Bass \cite{BB,BB3} constructed 
Brownian motion and obtained detailed two-sided heat kernel estimates.
The uniqueness of Brownian motion on the carpets was proved in \cite{BBKT}. 
In \cite{KZ}, Kusuoka and Zhou introduced a way of constructing local regular self-similar Dirichlet forms on a class of self-similar sets, including the 
strongly recurrent Sierpi\'nski carpets, 
as the limit of discrete energies on graph partitions of the fractals. In recent years, on the Sierpi\'nski carpet itself \cite{shi}, and on other classes of Sierpi\'nski carpet like fractals, including unconstrained Sierpi\'nski carpets \cite{CQ}, 
 chipped    Sierpi\'nski carpets 
 and crosses \cite{Ki2}, and 
hollow Sierpi\'nski carpets \cite{CQ2}, new progresses have been made in defining self-similar Dirichlet forms or more generally $p$-energies. A key step involved these examples is that one can derive $\sigma_m\lesssim R_m$ (which are  essentially the conditions (B1) and (B2) of \cite{KZ}), where $R_m$ and $\sigma_m$, $m\geq 1$ are two classes of constants in  \cite{KZ}, using the fact that functions in the domain of the Dirichlet forms are H\"older continuous in low dimensional cases. 

 In \cite{Ki2}, Kigami introduced the condition `$p$-conductive homogeneity' (see Definition \ref{def256}) as a generalization of $\sigma_m\lesssim R_m$ for $p\in (1,\infty)$. 
Let  
\begin{eqnarray*}
	\operatorname{dim}_{AR}(K,d)&:=&\inf\{\alpha: \mbox{there exists a metric $\rho$ on $K$ that is quasi-symmetric to $d$ and}\\ 
	&& \hskip 0.1truein \mbox{a Borel regular measure $\mu$ that is $\alpha$-Ahlfors regular
		with respect to $\rho$.}\}
\end{eqnarray*}
$\operatorname{dim}_{AR}(K,d)$ is called the Ahlfors regular conformal dimension of $(K,d)$ 
(see \cite[page 4]{Ki2}). When $p>\operatorname{dim}_{AR}(K,d)$, it follows from \cite[Theorems 4.7.6 and 4.9.1]{Ki1} that every function in the space  $\mathcal{W}^p(K)$ of the $p$-energy has a  H\"older continuous modification. 
On the other hand,  not much is known about the case $p\leq \operatorname{dim}_{AR}(K,d)$, even   under
 the conductive homogeneity condition. 
 In this note, assuming $K$ is compact and that the $p$-conductive homogeneity holds, 
we show that  $\mathcal{W}^p(K)$ can not be embedded into $C(K)$  when  $p\leq  \operatorname{dim}_{AR}(K,d)$.
More precisely,   for $p\leq \operatorname{dim}_{AR}(K,d)$, we construct 
a function in $\mathcal{W}^p(K)$ that is unbounded on $K$.   This affirmatively answers a conjucture by Kigami  \cite[Conjecture on page 9]{Ki2}. 
We  point out that, as mentioned in \cite{Ki2}, a more fundamental question is whether  $\mathcal{W}^p(K)\subset C(K)$ is a dense subset of $\mathbb L^p(K,\mu)$,
which   remains open  largely due to the lack of  an elliptic Harnack principle of $p$-harmonic functions on the corresponding graphs.

 \begin{theorem}
Let $(K,d)$ be 
a connected compact 
metric space satisfying \cite[Assumption 2.15]{Ki2}, let $1<p<\infty$, 
and assume that the $p$-conductive homogeneity 
holds in the sense of Definition \ref{def256}. 
 Then, the embedding $\mathcal{W}^p(K)\subset C(K)$ holds if and only if $p>\operatorname{dim}_{AR}(K,d)$.
\end{theorem}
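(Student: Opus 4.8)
The plan is to treat the two implications separately, the forward one being essentially known and the reverse one carrying the new content. For the direction $p>\dim_{AR}(K,d)\Rightarrow \mathcal{W}^p(K)\subset C(K)$, I would simply invoke \cite[Theorems 4.7.6 and 4.9.1]{Ki1}, which under the $p$-conductive homogeneity furnish a H\"older-continuous modification of every $u\in\mathcal{W}^p(K)$; this gives the embedding with no further work.

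The substance lies in the converse: assuming $p\le\dim_{AR}(K,d)$, I would construct a single $u\in\mathcal{W}^p(K)$ that is essentially unbounded, ruling out any bounded (hence any continuous) modification. First I would fix a point $x_*\in K$ admitting a nested sequence of cells $K_0\supset K_1\supset K_2\supset\cdots$ with $\bigcap_k K_k=\{x_*\}$, for instance the fixed point of one of the contractions in the self-similar structure of \cite[Assumption 2.15]{Ki2}. Let $\phi_k$ be the $p$-equilibrium potential of the annular shell $K_{k-1}\setminus K_k$, i.e.\ the $\mathcal{E}_p$-minimizer with $\phi_k\equiv 1$ on $K_k$ and $\phi_k\equiv 0$ on $K\setminus K_{k-1}$, so that $\mathcal{E}_p(\phi_k)=\capa_p(K_k,\,K\setminus K_{k-1})=:\kappa_k$. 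Setting $u=\sum_{k\ge1}c_k\phi_k$ for suitable weights $c_k\ge0$, the crucial observation is that the energy-supports are the pairwise-disjoint shells $K_{k-1}\setminus K_k$, and on each shell only one summand is non-constant; hence the $p$-energy is \emph{additive} rather than merely subadditive: up to uniform constants from the discrete-to-limit comparison,
\begin{equation*}
\mathcal{E}_p(u)\asymp\sum_{k\ge1}c_k^{\,p}\,\kappa_k.
\end{equation*}
By H\"older duality, such a $u$ can be arranged with $\mathcal{E}_p(u)<\infty$ while formally $u(x_*)=\sum_k c_k=\infty$ precisely when the total $p$-resistance diverges, $\sum_{k\ge1}\kappa_k^{-1/(p-1)}=\infty$; the explicit choice $c_k\asymp\kappa_k^{-1/(p-1)}/T_k$ with $T_k=\sum_{j\le k}\kappa_j^{-1/(p-1)}$ then does the job.

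The heart of the argument is therefore to show that $p\le\dim_{AR}(K,d)$ forces $\sum_k\kappa_k^{-1/(p-1)}=\infty$, equivalently that $x_*$ has null $p$-capacity. Here I would use the $p$-conductive homogeneity to obtain sharp two-sided scaling of the cross-annular capacities $\kappa_k$ across consecutive levels, together with the characterization of $\dim_{AR}(K,d)$ as the critical exponent for this scaling: for $p>\dim_{AR}$ the resistances $\kappa_k^{-1/(p-1)}$ decay geometrically (so $x_*$ has positive capacity and, by the same duality, every finite-energy function is bounded), whereas for $p\le\dim_{AR}$ they fail to be summable, including the borderline $p=\dim_{AR}$, which mirrors the logarithmic $p=d$ case of $W^{1,p}(\R^d)$. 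I expect this translation to be the main obstacle, since it requires extracting sharp (not merely sub-/super-multiplicative) control of the discrete $p$-conductances from the homogeneity hypothesis and matching its critical value exactly to $\dim_{AR}$.

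Finally I would verify that the formally constructed $u$ genuinely lies in $\mathcal{W}^p(K)$. Approximating by the finite sums $u^{(N)}=\sum_{k\le N}c_k\phi_k$, whose scaled discrete energies are uniformly bounded by $\sum_k c_k^{\,p}\kappa_k$, I would pass to the limit using lower semicontinuity (completeness) of $\mathcal{E}_p$ under the scaling-limit definition of $\mathcal{W}^p(K)$, and check $u\in\mathbb L^p(K,\mu)$ via $\int_K|u|^p\,d\mu\le\sum_k S_k^{\,p}\,\mu(K_{k-1}\setminus K_k)<\infty$, where $S_k=\sum_{j\le k}c_j$ grows at most polynomially in $k$ while $\mu(K_k)$ decays geometrically. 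Since $u\ge S_{k-1}$ on each $K_{k-1}\setminus K_k$, the superlevel sets $\{u>M\}$ have positive measure for every $M$, so $u\notin\mathbb L^\infty(K,\mu)$ and admits no continuous modification, completing the reverse implication.
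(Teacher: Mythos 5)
Your skeleton is the same as the paper's: both proofs build a harmonic-series-type singularity at a point out of annular cutoff functions whose energies add because the shells are essentially disjoint (the paper's proof of Theorem \ref{T:2.6} uses weights $1/j$ and shells $A_j=\Gamma_{M_*}(w_j)\subset B_j=\Gamma_{2M_*}(w_j)$ along an address of a point), and your forward direction via \cite[Theorems 4.7.6 and 4.9.1]{Ki1} is fine (the paper cites \cite[Lemma 3.34]{Ki2} instead). But two of your load-bearing steps are genuine gaps. The first is the one you yourself flag as ``the main obstacle'': translating $p\le\operatorname{dim}_{AR}(K,d)$ into divergence of the resistance series. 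You do not need to extract this yourself --- it is exactly \cite[Proposition 3.3]{Ki2}, which says $p>\operatorname{dim}_{AR}(K,d)$ if and only if $\sigma>1$, combined with \cite[Theorem 1.3]{Ki2}, which under conductive homogeneity gives the two-sided estimates \eqref{e:2.1}. Once $\sigma\le 1$, the renormalized shell capacities are uniformly bounded above by the conductance bound \eqref{eq:cond1}, so your Wiener-type sum diverges trivially and one may simply take weights $1/j$, as the paper does. Left unresolved, as in your write-up, the proof is incomplete, although this gap is fillable by citation.

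The second gap is more serious: your claimed additivity $\mathcal{E}_p(u)\asymp\sum_k c_k^p\kappa_k$ ``up to uniform constants'' is not correct as stated for the functional that actually defines $\mathcal{W}^p(K)$, namely $\sup_m\sigma^m\mcE_p^m(P_m\,\cdot\,)$. The obstruction is at coarse scales near the singular point $x_*$: a level-$m$ cell containing $x_*$ intersects \emph{every} shell of index $k\gtrsim m$, so $P_m\phi_k$ is non-constant across that cell's edges for unboundedly many $k$ simultaneously, and the ``each edge sees at most two non-constant summands'' disjointness argument collapses; the cross terms can be tamed only with a quantitative estimate using the geometric decay of $\mu(K_w)$ in Assumption \ref{assump24}(iii), not by formal additivity plus lower semicontinuity. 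This is precisely the difficulty the paper's machinery is designed to bypass: it never forms an infinite sum of cutoffs, but instead builds at each level $n$ a \emph{finite} discrete sum $f_n$ whose gradient supports are made exactly disjoint at level $n$ by the buffer construction ($B_j^*=\Gamma_1(B_j)$ together with Lemma \ref{lemma31}), so that $\mcE_p^n(f_n)=\sum_j j^{-p}\mcE_p^n(f_{n,j})$ exactly; it then transfers the level-$n$ bound to all coarser levels $m\le n$ via the neighbor disparity hypothesis \eqref{eq:cond2} and \cite[Lemma 2.27]{Ki2}, and only at the end passes to a weak $\mathbb L^p$ limit. Relatedly, your starting point --- existence of equilibrium potentials $\phi_k\in\mathcal{W}^p(K)$ equal to $1$ on $K_k$ and $0$ off $K_{k-1}$ --- is itself not free in this framework; such cutoffs must be manufactured from the conductance bound \eqref{eq:cond1}, which is essentially the paper's construction of the $f_{n,j}$. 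So the conceptual picture you describe is the right one, but the two places where the real work lies are exactly the places your proposal leaves open.
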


\section{Settings}
In this section, we  recall   the basic setting from Kigami's frameworks \cite{Ki1,Ki2}. We begin with a review of the definition of trees with a reference point \cite[Definition 2.1 and 2.2]{Ki2}.

\begin{definition} \rm
	Let $T$ be a countably infinite set, and let $E\subset T\times T\setminus \{(w,w):w\in T\}$ be a collection of edges on $T$ which satisfies that $(v,w)\in E$ if $(w,v)\in E$.  
	\begin{enumerate}
		\item The graph $(T,E)$ is called locally finite if $\#\{v\in T:(w,v)\in E\}<\infty$ for any $w\in T$. 
		
		\item For $w_0,\cdots,w_n\in T$,  $(w_0,w_1,\cdots,w_n)$ is called a path between $w_0$ and $w_n$ if and only if $(w_i,w_{i+1})\in E$ for any $i=0,1,\cdots,n-1$. A path $(w_0,w_1,\cdots,w_n)$ is called simple if and only if $w_i\neq w_j$ for any $0\leq i<j\leq n$.
		
		\item $(T,E)$ is called a tree if and only if there exists a unique simple path between $w$ and $v$ for any $w,v\in T$ with $w\neq v$. For a tree $(T,E)$, the unique simple path between two vertices $w$ and $v$ is called the geodesic   between $w$ and $v$ and denoted by $\overline{wv}$. 
	\end{enumerate}
\end{definition}

\begin{definition} \rm
Let $(T,E)$ be a tree and let $\emptyset\in T$. The triple $(T,E,\emptyset)$ is called a tree with a reference point $\emptyset$. 
\begin{enumerate}
	\item Define $\pi:T\to T$ by 
	\[
	\pi(w)=
	\begin{cases}
		w_{n-1}\quad &\hbox{ if }w\neq\emptyset\hbox{ and }\overline{\emptyset w}=(\emptyset,w_1,\cdots,w_{n-1},w),\\
		\emptyset\quad&\hbox{ if }w=\emptyset.
	\end{cases}
	\]
	and set $S(w)=\{v\in T:\pi(v)=w\}\setminus\{w\}$. Moreover, we write $S^1(w)=S(w)$, and inductively, for $k\geq 2$, 
	we set $S^k(w)=\bigcup_{v\in S(w)}S^{k-1}(v)$. 
	
	\smallskip
	
	\item For $w\in T$ and $m\geq 0$, we define $|w|=\min\{n\geq 0:\pi^n(w)=\emptyset\}$ and $T_n=\{w\in T:|w|=n\}$. 
\end{enumerate}
\end{definition}

Next, we recall the definition of partition of a metric space from \cite[Definition 2.3]{Ki2} in a slightly stronger manner in that we require $K$ to be compact connected metric space instead of compact metrizable topological space having 
no isolated points. This is reasonable 
as the running assumption   of \cite{Ki2} is $K$ is a compact connected metric space; see \cite[line -3 on page 3]{Ki2}.

\begin{definition}\label{def23} \rm
Let $T$ be a countably infinite set, $(T,E)$ be a locally finite tree with no  leaves,  
and $\emptyset$ be the reference point of $(T,E)$. Let $(K,d)$ be a compact connected metric space. A collection of non-empty compact subsets $\{K_w\}_{w\in T}$ of $K$ is said to be  a partition of $(K, d)$ parameterized by $(T,E,\emptyset)$
   if and only if  it satisfies the following conditions (P1) and (P2).
\begin{enumerate}[(P1)]
	\item $K_\emptyset=K$ and for each $w\in T$, $K_w$ is connected  and $K_w=\bigcup_{v\in S(w)}K_v$.  
	
	\item For any $\omega\in \Sigma:=\big\{(\omega(i))_{i\geq 0}\subset T:\omega(i)=\pi(\omega(i+1))\hbox{ for any }i\geq 0\big\}$, $\bigcap_{i=0}^\infty K_{\omega(i)}$ is a single point.
\end{enumerate}
\end{definition}

We also recall the following definition of covering system from \cite[Definitions 2.26 and 2.29]{Ki2} for the definition of neighbor disparity constants.
	
\begin{definition}\label{def:2.4} \rm 
Assume that $\{K_w\}_{w\in T}$ is a partition of a metric space $(K,d)$ parameterized by $(T,E,\emptyset)$.  
\begin{enumerate}[\rm (i)]
\item Let $\{G_i\}_{i=1}^k$ be a collection of subsets of $T_n$. The family $\{G_i\}_{i=1}^k$ is called a covering of $A\subset T_n$ with covering numbers $(N_T,N_E)$ if 
\[
A=\bigcup_{i=1}^kG_i,\quad \max_{w\in A}\{1\leq i\leq k:\,w\in G_i\}\leq N_T, 
\] 
and for any $u,v\in A$ such that $K_u\cap K_v\neq\emptyset$, there exists an integer $l\leq N_E$ and $w_1,w_2,\cdots,w_{l+1}\in A$ such that $K_{w_j}\cap K_{w_{j+1}}\neq\emptyset$ and $\{w_j,w_{j+1}\}\subset G_i$ for some $i\in \{1,2,\cdots,k\}$ for every $j\in \{1,2,\cdots,l\}$. 

\item We call $\mathscr{I}\subset \bigcup_{n=0}^\infty\{A:\,A\subset T_n\}$ a covering system with covering number $(N_T,N_E)$ if the following are satisfied
\begin{enumerate}
	\item[(ii.1)] $\sup_{A\in\mathscr{I}}\#A<\infty$, where $\#A$ is the cardinality of $A$.
	
	\item[(ii.2)] For any $w\in T$ and $m\geq 1$, there exists a finite subset $\mathscr{N}\subset \mathscr{I}$ such that $\mathscr{N}$ is a covering of $S^m(w)$ with covering numbers $(N_T,N_E)$.
	
	\item[(ii.3)] For any $G\in \mathscr{I}$, there exists a finite subset $\mathscr{N}\subset \mathscr{I}$ such that $\mathscr{N}$ is a covering of $S^m(G):=\cup_{w\in G} S^m (w)$ with covering numbers $(N_T,N_E)$.
\end{enumerate}
\end{enumerate}
\end{definition}

 Write 
\[
\begin{aligned}
	\Gamma_{M}(w)=\{v\in T_{|w|}:\,&\hbox{there exist }k\leq M\hbox{ and }w_0,w_1,\cdots,w_k\in T_{|w|}\hbox{ such that }\\&w=w_{0},\,v=w_{k}\text{ and }K_{w_{i}}\cap K_{w_{i+1}}\neq\emptyset\hbox{ for }i=0,1,\cdots,k-1\}
\end{aligned}
\]
for $M\geq 1$ and $w\in T$. We consider the following conditions, 
which is a subset of \cite[Assumptions 2.6,\,2.7,\,2.10 and 2.12]{Ki2}.

\begin{assumption}\label{assump24}
	Let $T$ be a countably infinite set and $(T,E)$ be a locally finite tree with no leaves and $\emptyset$ be the reference point of $(T,E)$. Let $(K,d)$ be a compact connected metric space with $diam(K, d):=\sup_{x,y\in K}d(x,y)=1$. In addition, let $\{K_w\}_{w\in T}$ be a partition of $(K,d)$ parameterized by $(T,E,\emptyset)$. 
	\begin{enumerate}	[\rm (i)]
		\item There exists $L_*\in (0,\infty)$ such that 
		\[\#\{ v\in T_n\setminus\{w\}:K_w\cap K_v\neq \emptyset\}\leq L_*\quad \hbox{ for each }n\geq 0\hbox{ and }w\in T_n.\]  
		
		\item There exists $M_*\geq 1$ such that 
		\[
		\pi\big(\Gamma_{M_*+1}(w)\big)\subset \Gamma_{M_*}\big(\pi(w)\big).
		\]
		
		\item $\mu$ is a Borel regular probability measure on $(K,d)$ that satisfies
		\[
	    0<\mu(K_w)=\sum_{v\in S(w)}\mu(K_v)\quad\hbox{ for each }w\in T,
		\]
		and there are $c_\mu\in (0,\infty)$ and $\gamma\in (0,1)$ such that 
		\[
		\mu(K_w)\leq c_\mu\gamma^{|w|}\quad\hbox{ for each }w\in T.
		\] 
	\end{enumerate}
\end{assumption}\medskip

With the measure $\mu$ in Assumption \ref{assump24}(iii), for $f\in \mathbb L^1(K,\mu)$ and $n\geq 0$, we define $P_nf\in l(T_n)$ by 
\[
P_nf(w)=\frac{1}{\mu(K_w)}\int_{K_w}f(x)\mu(dx)\quad\hbox{ for each }w\in T_n. 
\]\medskip 

We next define the graph energy on $T_n$. Denote by  $l(A):=\{f:A\to \R\}$  the space of functions on $A$. 

\begin{definition}\label{def25} \rm
Assume that $\{K_w\}_{w\in T}$ is a partition of a metric space $(K,d)$ parameterized by $(T,E,\emptyset)$. Let $\mathscr{I}\subset \bigcup_{n=0}^\infty\{A:\,A\subset T_n\}$ be a covering system with covering number $(N_T,N_E)$. 
  For $p\in [1, \infty)$, define 
 the discrete $p$-energy on $A\subset T_n,n\geq 1$ by
\[
\mathcal{E}_{p,A}^{n}(f)=\frac12\sum_{w,v\in A:K_w\cap K_v\neq \emptyset}
\big| f(w)-f(v)\big|^p  
\quad\hbox{ for }f\in l(A).
\]
For short, when $A=T_n$,  write $\mathcal{E}_{p}^{n}(f)$ for $\mathcal{E}_{p,T_n}^{n}(f)$. 
Define effective conductance and neighbor disparity constant as follows. 

\begin{enumerate} [\rm (i)]
	\item For $n\geq 1,m\geq 0$ and disjoint $A_1,A_2\subset T_n$, define the effective conductance   
	\[
	\mathcal{E}_{p,m}(A_1,A_2,T_n) :=\inf\{\mathcal{E}_{p}^{n+m}(f):\,f\in l(T_{m+n}),\,f|_{S^m(A_1)}=1,\,f|_{S^m(A_2)}=0\}.
	\]
	
	\item For $n\geq 1,m\geq 0$ and $A\subset T_n$, define  the neighbor disparity constant to be
	\[ 
	\sigma_{p,m}(A) :=\sup_{f\in\mathbb L^1(K;\mu)}\frac{\mcE_{p,A}^n\big((P_nf)|_A\big)}{\mcE_{p,S^m(A)}^{n+m}\big((P_{n+m}f)|_{S^m(A)}\big)}.
	\]
\end{enumerate}
\noindent Define
\begin{align*}
\sigma_{p,m,n}&=\max\limits_{A\in\mathscr{I_n},\,A\subset T_n}\sigma_{p,m}(A)
\quad \mbox{ and } \quad  
\sigma_{p,m}=\sup_{n\ge 1}\sigma_{p,m,n},\\
\mathcal{E}_{M,p,m}&=\sup_{w\in T, |w|\ge 1}\mathcal{E}_{p,m}(w,T_{|w|}\setminus\Gamma_{M}(w),T_{|w|}).\end{align*}
\end{definition} 

 Hereafter, we fix a covering system $\mathscr{I}$ with covering number $(N_T,N_E)$. Under mild assumptions on $\sigma_{p,m,n}$ and $\mathcal{E}_{p,m}(w,T_{|w|}\setminus\Gamma_{M}(w),T_{|w|})$, we will prove in Section \ref{sec3} that there is an unbounded function with uniformly bounded  discrete $p$-energies. See the precise statement in the theorem below. 

\begin{theorem}  \label{T:2.6}
Suppose that Assumption \ref{assump24} holds and that 
\begin{eqnarray}
	&\mathcal{E}_{p,m}(w,T_{|w|}\setminus\Gamma_{M_*}(w),T_{|w|})\leq c\sigma^{-m}\ &\hbox{ for all } m\ge 0, w\in T,\label{eq:cond1}\\
	&\sigma_{p,m,n}\leq c\sigma^{m}\ &\hbox{ for all }m\ge 0, n\geq 1.\label{eq:cond2}
\end{eqnarray}
hold with $\sigma\leq 1$, where $M_*\geq 1$ is the constant in Assumption  \ref{assump24}(ii). 
For $p \in (1, \infty)$, there is an unbounded $f\in \mathbb L^p(K;\mu)\setminus C(K)$ such that $\sup\limits_{n\geq 1}\sigma^n\mcE^n_p(P_nf)<\infty$. 
\end{theorem}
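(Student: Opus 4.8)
The plan is to build $f$ as a nested ``logarithmic tower'' along a single geodesic ray, in direct analogy with the classical borderline example $\log\log(1/|x|)\in W^{1,d}(\mathbb{R}^d)\setminus C$. Fix an infinite ray $\omega=(\omega(0),\omega(1),\dots)$ with $\omega(n)\in T_n$ and $\pi(\omega(n+1))=\omega(n)$, let $x_0=\bigcap_n K_{\omega(n)}$, and set
\[
f=\sum_{n\ge 1}\tfrac1n\,h_n ,
\]
where each $h_n$ is a ``harmonic bump'' at scale $n$: $0\le h_n\le 1$, $h_n\equiv 1$ on $K_{\omega(n)}$, $h_n\equiv 0$ off $U_n:=\bigcup_{v\in\Gamma_{M_*}(\omega(n))}K_v$, and, crucially, $\sup_{k}\sigma^{k}\mathcal{E}^{k}_p(P_kh_n)\le C\sigma^{n}$ with $C$ independent of $n$. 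The choice of weights $1/n$ is dictated by the critical case $\sigma=1$: unboundedness will come from the divergence of $\sum_n \tfrac1n$, while finiteness of the $\sigma$-weighted energy will come from the convergence of $\sum_n n^{-p}$, where $p>1$ is used decisively.

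For the bumps, for fixed $n$ and each $m\ge 0$ I would take a near-minimizer $g_m\in l(T_{n+m})$ of $\mathcal{E}_{p,m}(\omega(n),T_n\setminus\Gamma_{M_*}(\omega(n)),T_n)$, truncated so that $0\le g_m\le 1$; by \eqref{eq:cond1}, $\mathcal{E}^{n+m}_p(g_m)\le 2c\sigma^{-m}$. Extend $g_m$ to $G_m$ on $K$, constant on each level-$(n+m)$ cell. Using the neighbor disparity bound \eqref{eq:cond2} together with the covering system $\mathscr{I}$ (via the standard estimate $\mathcal{E}^{k}_p(P_kG_m)\le C\,\sigma_{p,n+m-k,k}\,\mathcal{E}^{n+m}_p(P_{n+m}G_m)$ from \cite{Ki2}) one gets $\sigma^{k}\mathcal{E}^{k}_p(P_kG_m)\le C'\sigma^{n}$ uniformly in $m$ and in $k\le n+m$. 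Since $\|G_m\|_{\mathbb{L}^p}\le\mu(U_n)^{1/p}$ is bounded and $1<p<\infty$, a subsequence converges weakly in $\mathbb{L}^p$ to some $h_n$; testing against $\mathbf 1_{K_w}$ gives $P_kG_m\to P_kh_n$ pointwise for each fixed $k$, and passing to the limit (the energy $\mathcal{E}^k_p$ is continuous on the finite-dimensional space $l(\{w\in T_k:K_w\subset U_1\})$, finite by local finiteness of the tree) yields $\sigma^{k}\mathcal{E}^{k}_p(P_kh_n)\le C'\sigma^{n}$ for \emph{every} $k$, while the boundary values $h_n\equiv1$ on $K_{\omega(n)}$ and $h_n\equiv0$ off $U_n$ survive.

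Three properties of $f$ then need checking. For \emph{unboundedness}, since $h_j\equiv1$ on $K_{\omega(n)}\subset K_{\omega(j)}$ for $j\le n$, we have $f\ge\sum_{j\le n}\tfrac1j\to\infty$ on $K_{\omega(n)}$, a set of positive measure, so $f\notin L^\infty\supset C(K)$. For \emph{$\mathbb{L}^p$-membership}, Assumption~\ref{assump24}(i)--(iii) give $\mu(U_n)\le C\gamma^{n}$, hence $\|h_n\|_{\mathbb{L}^p}\le C\gamma^{n/p}$ and $\sum_n\tfrac1n\|h_n\|_{\mathbb{L}^p}\le C\sum_n\gamma^{n/p}<\infty$ because $\gamma<1$, so the series converges in $\mathbb{L}^p$. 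For the \emph{energy}, writing $\delta_e h_n:=P_kh_n(u)-P_kh_n(v)$ across an edge $e=(u,v)$ of $T_k$, the decisive point is that the transition regions $\{e:\delta_e h_n\ne0\}$ have uniformly bounded overlap $C_0$, so that by $\big|\sum_i x_i\big|^p\le C_0^{p-1}\sum_i|x_i|^p$,
\[
\sigma^{k}\mathcal{E}^{k}_p(P_kf)\le C_0^{p-1}\sum_{n}\tfrac1{n^{p}}\,\sigma^{k}\mathcal{E}^{k}_p(P_kh_n)\le C\sum_{n}\tfrac{\sigma^{n}}{n^{p}}\le C\sum_{n}\tfrac1{n^{p}}<\infty
\]
uniformly in $k$ (using $\sigma\le1$, $p>1$), which lower semicontinuity of $\mathcal{E}^k_p$ along the partial sums makes rigorous for the infinite series.

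The main obstacle is exactly the bounded-overlap claim. It cannot be reached by the triangle inequality, which yields only $\mathcal{E}^k_p(P_kf)^{1/p}\le\sum_n\tfrac1n\mathcal{E}^k_p(P_kh_n)^{1/p}$ and hence a bound $\big(\sum_n\tfrac1n\sigma^{n/p}\big)^{p}$ that \emph{diverges} at the critical value $\sigma=1$; one genuinely needs the $p$-th powers $\sum_n n^{-p}$, i.e.\ the essential disjointness of the transition layers. I would establish it via the monotonicity observation that, for the ancestor chain $a_n:=\pi^{k-n}(u)$ of a fixed level-$k$ cell, both $\{n:a_n=\omega(n)\}$ and $\{n:a_n\in\Gamma_{M_*}(\omega(n))\}$ are down-sets $\{0,\dots,n_0\}$ and $\{0,\dots,n_1\}$ — the latter precisely by Assumption~\ref{assump24}(ii), $\pi(\Gamma_{M_*+1}(w))\subset\Gamma_{M_*}(\pi(w))$ — so that $h_n$ is non-constant on $K_u$ only for $n_0<n\le n_1$, with overlap $n_1-n_0$. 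Bounding $n_1-n_0$ by a constant $C_0=C_0(L_*,M_*)$ is the crux: it requires choosing $\omega$ so that $x_0$ is combinatorially interior, since the bound fails for ``corner'' rays along which a divergent branch stays within the $M_*$-neighborhood for arbitrarily many scales, and the existence of such a ray is where the connectedness of $K$ and Assumption~\ref{assump24}(i)--(ii) must be used.
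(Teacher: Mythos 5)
Your overall strategy --- a logarithmically weighted sum of annular bump functions along a ray $\omega$, transfer of the energy bounds across scales via \cite[Lemma 2.27]{Ki2} together with \eqref{eq:cond2}, and a weak $\mathbb L^p$ limit to produce $f$ --- is the strategy of the paper. But the step you yourself flag as the crux, the bounded-overlap claim $n_1(u)-n_0(u)\le C_0$, is a genuine gap, not a technicality. With your design of the bumps (inner set a \emph{single} cell $K_{\omega(n)}$, support its $M_*$-neighborhood, one bump at \emph{every} generation $n$), the claim really does fail for some rays, as you concede: already for $K=[0,1]$ with the dyadic partition and $M_*=1$, if $\omega$ decreases to $1/2$ from the left, then for $u=[1/2,1/2+2^{-k}]$ every ancestor $a_n=[1/2,1/2+2^{-n}]$ lies in $\Gamma_{M_*}(\omega(n))\setminus\{\omega(n)\}$, so $n_1(u)-n_0(u)=k$ is unbounded. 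Your proposed repair is to choose a ``combinatorially interior'' ray, but you give no construction of one, and no argument that Assumption \ref{assump24} (a purely combinatorial hypothesis about the partition) guarantees its existence; that existence statement is itself a nontrivial claim, left entirely unproven, and the whole proof hinges on it.

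The paper closes exactly this hole by redesigning the bumps rather than the ray, after which the overlap is \emph{exactly zero for every ray}. Three changes accomplish this: the scales are spaced $M_*+1$ generations apart, $w_j=\omega(j(M_*+1))$; the $j$-th bump equals $1$ on the refinement of the full neighborhood $A_j=\Gamma_{M_*}(w_j)$ rather than of a single cell; and it is supported on the refinement of $B_j=\Gamma_{2M_*}(w_j)$. Iterating Assumption \ref{assump24}(ii) (Lemma \ref{lemma31}) then gives
\begin{equation*}
\pi^{M_*+1}\big(\Gamma_1(B_{j+1})\big)=\pi^{M_*+1}\big(\Gamma_{2M_*+1}(w_{j+1})\big)\subset \Gamma_{M_*}(w_j)=A_j,
\end{equation*}
which is \eqref{eqn32}: the support of the $(j+1)$-st bump, together with all cells touching it, sits inside the region where the $j$-th bump is identically $1$. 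Hence the transition layers of distinct bumps are disjoint, the discrete energies split exactly, $\mcE^n_p(f_n)=\sum_j j^{-p}\mcE^n_p(f_{n,j})\le C_1\sum_j j^{-p}\sigma^{-n+(M_*+1)j}$, and $\sigma\le 1$ with $p>1$ yields \eqref{eqn35} --- with no condition whatsoever on $\omega$. (A secondary simplification: the paper never needs your per-scale weak limits to build each bump on $K$; it works with discrete near-minimizers $f_{n,j}\in l(T_n)$ at each fixed level, sets $\hat f_n=\sum_{w\in T_n}f_n(w)\mathbbm{1}_{K_w}$ so that $P_n\hat f_n=f_n$, and passes to a single weak limit at the very end.) If you replace your bumps by these widened, spaced ones, the rest of your argument --- the $\mathbb L^p$ bound, the unboundedness on $K_{w_k}$, and the use of \cite[Lemma 2.27]{Ki2} --- goes through essentially as you wrote it.
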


\medskip

A sufficient condition for \eqref{eq:cond1}-\eqref{eq:cond2} is given in Remark \ref{Junthm} below. To state it, we need a definition.

\begin{definition}{\rm (Conductive Homogeneity 
\cite[Definition 3.4]{Ki2})} \rm 
\label{def256}
Let $p\in [1, \infty)$. A compact metric space $K$ with a partition $\{K_w\}_{w\in T}$ and a measure $\mu$ is said to be $p$-conductively homogeneous if and only if 
\[\sup_{m\ge 0}\sigma_{p,m}\mathcal{E}_{M_*,p,m}<\infty,\] 
where $M_*$ is the same constant as given in Assumption \ref{assump24}\,(ii).
\end{definition}

\begin{remark}\label{Junthm}   Suppose that \cite[Assumptions 2.6, 2.7, 2.10 and 2.12]{Ki2} hold and $p\in (1, \infty)$.
It is shown in    \cite[Theorem 1.3]{Ki2} 
that  the connected compact metric space $(K, d)$ 
is $p$-conductively homogeneous if and only if there exist $c_1,c_2>0$ and $\sigma>0$ such that the followings hold for any $m\ge 0, n\ge 1$ and $w\in T$,
\begin{equation} \label{e:2.1} 
c_1\sigma^{-m}\le \mathcal{E}_{p,m}(w,T_{|w|}\setminus\Gamma_{M_*}(w),T_{|w|})\le c_2\sigma^{-m}
\quad \hbox{and} \quad 
c_1\sigma^m\le \sigma_{p,m,n}\le c_2\sigma^m.
\end{equation}
\end{remark}

\medskip

\begin{definition} \rm 
Assume that \eqref{e:2.1} holds. For $p\in (1, \infty)$, define 
\begin{equation}\label{eq:W^pdef}
\mathcal{W}^p(K):= \Big\{f:f\in \mathbb L^p(K;\mu) \hbox{ with } \sup_{n\geq 1}\sigma^n\mcE^n_p(P_nf)<\infty \Big\},
\end{equation}
where $\sigma$ is the constant of \eqref{e:2.1}. 
\end{definition}

The following corollary to Theorem \ref{T:2.6} answers affirmatively the conjecture on page 9 of \cite{Ki2}.

\begin{corollary}\label{coro}
Suppose that $p\in (1, \infty)$, and the connected compact metric space $(K,d)$ is 
conductively $p$-homogeneous and satisfies \cite[Assumption 2.15]{Ki2}. Then
 $\mathcal{W}^p(K)\subset C(K)$ if and only if $p>\operatorname{dim}_{AR}(K,d)$. 
\end{corollary}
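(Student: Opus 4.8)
The plan is to deduce Corollary \ref{coro} from Theorem \ref{T:2.6} by matching the two regimes $p>\operatorname{dim}_{AR}(K,d)$ and $p\le\operatorname{dim}_{AR}(K,d)$ to the two directions of the claimed equivalence. First I would treat the forward (easy) direction: when $p>\operatorname{dim}_{AR}(K,d)$, the embedding $\mathcal{W}^p(K)\subset C(K)$ is already known. As noted in the introduction, this follows from \cite[Theorems 4.7.6 and 4.9.1]{Ki1}, which guarantee that every $f\in\mathcal{W}^p(K)$ admits a H\"older continuous modification. Since we are under \cite[Assumption 2.15]{Ki2} and conductive $p$-homogeneity, Remark \ref{Junthm} supplies the two-sided estimate \eqref{e:2.1}, so the definition \eqref{eq:W^pdef} of $\mathcal{W}^p(K)$ is available and these cited regularity results apply verbatim. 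I would state this direction briefly, citing \cite{Ki1}, rather than reproving the H\"older estimate.

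The substantive direction is the contrapositive of necessity: assuming $p\le\operatorname{dim}_{AR}(K,d)$, I must exhibit an $f\in\mathcal{W}^p(K)\setminus C(K)$, which shows $\mathcal{W}^p(K)\not\subset C(K)$. The strategy is to verify that the hypotheses \eqref{eq:cond1}--\eqref{eq:cond2} of Theorem \ref{T:2.6} hold \emph{with $\sigma\le 1$} in this regime, and then invoke Theorem \ref{T:2.6} directly. Conductive $p$-homogeneity together with Remark \ref{Junthm} already gives the two-sided bounds \eqref{e:2.1} for some $\sigma>0$; the upper bounds in \eqref{e:2.1} are exactly \eqref{eq:cond1}--\eqref{eq:cond2}. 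So the only thing to check is the sign/size condition $\sigma\le 1$. Here is where the Ahlfors regular conformal dimension enters: I would use the characterization of $\operatorname{dim}_{AR}(K,d)$ in terms of the critical exponent governing the scaling constant $\sigma=\sigma_p$. The key fact, drawn from Kigami's framework \cite{Ki2} (and the identification of the conformal dimension with the $p$ at which the conductance exponent crosses a threshold), is that $\sigma_p\le 1$ precisely when $p\le\operatorname{dim}_{AR}(K,d)$, with $\sigma_p=1$ at equality and $\sigma_p<1$ below. Concretely, $\log(1/\sigma_p)/\log(1/\gamma)$ (or the analogous ratio built from the scaling in \eqref{e:2.1}) is a nonincreasing function of $p$ that changes sign exactly at $\operatorname{dim}_{AR}(K,d)$, so the inequality $p\le\operatorname{dim}_{AR}(K,d)$ translates into $\sigma\le 1$.

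Once $\sigma\le 1$ is established, Theorem \ref{T:2.6} produces an unbounded $f\in\mathbb L^p(K;\mu)$ with $\sup_{n\ge1}\sigma^n\mathcal{E}^n_p(P_nf)<\infty$; by the definition \eqref{eq:W^pdef} this is exactly an element of $\mathcal{W}^p(K)$, and being unbounded it cannot lie in $C(K)$ (recall $K$ is compact). This gives $\mathcal{W}^p(K)\not\subset C(K)$, completing the contrapositive. Combining the two directions yields the stated equivalence.

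I expect the main obstacle to be the middle step: rigorously justifying that $p\le\operatorname{dim}_{AR}(K,d)$ forces $\sigma\le1$. This requires pinning down the precise relationship between the conductance scaling constant $\sigma=\sigma_p$ of \eqref{e:2.1} and the Ahlfors regular conformal dimension, which is the analytic heart linking Kigami's conductive homogeneity to $\operatorname{dim}_{AR}$. One must invoke \cite[Assumption 2.15]{Ki2} (quasi-symmetry/Ahlfors regularity compatibility), together with the monotonicity of $p\mapsto\sigma_p$ and its critical value at the conformal dimension, to conclude that the threshold for $\sigma_p=1$ is exactly $\operatorname{dim}_{AR}(K,d)$. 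Everything else (the $p>\operatorname{dim}_{AR}$ direction and the final application of Theorem \ref{T:2.6}) is a direct citation or a one-line consequence of the definitions.
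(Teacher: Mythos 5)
Your proposal is correct and follows essentially the same route as the paper: the easy direction is a citation of Kigami's H\"older regularity results, and the hard direction verifies $\sigma\le 1$ and invokes Theorem \ref{T:2.6}. The step you flag as the ``main obstacle'' --- that $p\le\operatorname{dim}_{AR}(K,d)$ forces $\sigma\le 1$ --- is in fact a direct citation, namely \cite[Proposition 3.3]{Ki2} (stated as \eqref{e:2.4} in the paper), so no monotonicity argument is needed; beyond that, the paper's proof only adds the bookkeeping that \cite[Assumption 2.15]{Ki2} implies the assumptions under which \cite[Theorem 1.3]{Ki2} holds, so that $\mathcal{W}^p(K)$ is well defined.
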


\begin{proof}
First of all, we remark that \cite[Assumptions 2.6,\,2.10 and 2.12]{Ki2} are consequences of
(1)-(4) of 
 \cite[Assumption 2.15]{Ki2} by \cite[Proposition 2.16]{Ki2}, and \cite[Assumption 2.7]{Ki2} is
  just \cite[Assumption 2.15(5)]{Ki2}.
   So 
     \cite[Theorem 1.3]{Ki2} 
   holds under  \cite[Assumption 2.15]{Ki2} and, consequently, 
    $\mathcal{W}^p(K)$ is well defined. Moreover, in view of Remark \ref{Junthm}, 
    \eqref{eq:W^pdef} holds under  \cite[Assumption 2.15]{Ki2}. 
	
By  \cite[Proposition 3.3]{Ki2},  
\begin{equation}\label{e:2.4}
p>\operatorname{dim}_{AR}(K,d)
\quad \hbox{if and only if} \quad  \sigma>1,
\end{equation} 
  where $\sigma$ is the constant of \eqref{e:2.1}.   
 By \eqref{e:2.4} and Theorem \ref{T:2.6},  $\mathcal{W}^p(K)\not\subset C(K)$ if $p\leq\operatorname{dim}_{AR}(K,d)$.
On the other hand,  it follows directly from 
\cite[Lemma 3.34]{Ki2}
and \eqref{e:2.4} that $\mathcal{W}^p(K)\subset C(K)$ if $p>\operatorname{dim}_{AR}(K,d)$. 
This combined with Theorem \ref{T:2.6} shows that 
$\mathcal{W}^p(K)\subset C(K)$ if and only if $p>\operatorname{dim}_{AR}(K,d)$. 
 \end{proof}

\begin{remark} \rm 
 Quite recently, Murugan and Shimizu \cite{MS} constructed a  
$p$-energy on planar Sierpi\'nski carpets for all $p\in (1, \infty)$. 
While they do not use the $p$-conductive homogeneity in the construction, 
they prove that the planar Sierpi\'nski carpet equipped with the self-similar measure with the equal weight is $p$-conductive homogeneous for 
any $p\in (1, \infty)$ --  
see \cite[Theorem C.28]{MS}. This gives  a non-trivial example that enjoys
$p$-conductive homogeneity for every $p\le \operatorname{dim}_{AR}(K,d)$. 
\end{remark}

\section{Proof of Theorem \ref{T:2.6}}\label{sec3}
In this section, we prove Theorem \ref{T:2.6}. For the convenience of the reader, we  illustrate 
the basic ideas behind our proof 
in the context of the classical  Sobolev space $W^{1,p}(\R^d)$ with $d\geq 2$ and $1\leq p\leq d$.
 Consider $f_n\in C_c^\infty(\R^d)$ such that 
\[
f_n|_{B(0,2^{-n})}\equiv 1,\quad f_n|_{\R^d\setminus B(0,2^{-n+1})} 
\quad \hbox{ and } \quad    | \nabla f_n (x)| \leq c_1 2^{n}.  
\]
Note  $\int_{\R^d}|\nabla f(x)|^pdx\leq c_2  2^{-n(d-p)}$ for $n\geq 1$. 
Define $f=\sum_{n=1}^\infty \frac{1}{n}f_n$. 
Then 
\[
\int_{\R^d}|\nabla f(x)|^pdx=\sum_{n=1}^\infty n^{-p}\int_{\R^d}|\nabla f_n(x)|^pdx \leq c_2 \sum_{n=1}^\infty n^{-p} 2^{-n(d-p)}<\infty
\]
 for any   $p\in [1, d]$  with $d\geq 2$. 
 On the other hand,    $f$ has a logarithmic singularity at $0$ and so $f$ is unbounded near $0$. 
This shows that $W^{1,p}\setminus C(\R^d)\supset \{f\}$ is non-empty. 
In the setting of partitions of metric spaces, we will modify the above approach by constructing the function $f$  as the weak limit of a sequence of functions.

\medskip

Now we proceed to give the proof of Theorem \ref{T:2.6}. First, we note the following.

\begin{lemma}\label{lemma31} 
Under Assumption \ref{assump24}(ii), we have 
\begin{equation}\label{e:3.1}
\pi^{k}\big(\Gamma_{M_*+k}(w)\big)\subset \Gamma_{M_*}\big(\pi^k(w)\big) \quad\hbox{ for any }w\in T\text{ and }k\geq 1.
\end{equation}
\end{lemma}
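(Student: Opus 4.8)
The plan is to deduce \eqref{e:3.1} by iterating the one-step inclusion of Assumption \ref{assump24}(ii) $k$ times, after first upgrading that inclusion so that the level $M_*+1$ may be replaced by any larger level. The only two ingredients needed are a monotonicity coming from the partition axiom (P1) and the elementary ``concatenation'' structure of the sets $\Gamma_M(w)$.

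First I would record the free inclusion
\[
\pi\big(\Gamma_M(w)\big)\subset \Gamma_M\big(\pi(w)\big)\qquad\text{for all } M\ge 1,\ w\in T.
\]
This holds because, by (P1), $K_v\subset K_{\pi(v)}$ for every $v$ (as $v\in S(\pi(v))$ when $v\neq\emptyset$, and trivially when $v=\emptyset$); hence $K_{v}\cap K_{v'}\neq\emptyset$ forces $K_{\pi(v)}\cap K_{\pi(v')}\neq\emptyset$. Applying $\pi$ to a path realizing $v\in\Gamma_M(w)$ and deleting repeated vertices therefore yields a path of length at most $M$ from $\pi(w)$ to $\pi(v)$ with consecutive cells meeting, i.e.\ $\pi(v)\in\Gamma_M(\pi(w))$.

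Next I would note the concatenation identity $\Gamma_{a+b}(w)=\bigcup_{u\in\Gamma_a(w)}\Gamma_b(u)$, immediate from splitting a path of length at most $a+b$ at its $a$-th vertex (and conversely joining paths). Combining this with the free inclusion upgrades Assumption \ref{assump24}(ii) to
\[
\pi\big(\Gamma_{M+1}(w)\big)\subset \Gamma_{M}\big(\pi(w)\big)\qquad\text{for every } M\ge M_*\text{ and } w\in T.
\]
Indeed, writing $M=M_*+l$ and $\Gamma_{M_*+1+l}(w)=\bigcup_{u\in\Gamma_l(w)}\Gamma_{M_*+1}(u)$, I would apply Assumption \ref{assump24}(ii) to each $u$ to get $\pi(\Gamma_{M_*+1}(u))\subset\Gamma_{M_*}(\pi(u))$, then use the free inclusion $\pi(u)\in\Gamma_l(\pi(w))$ to reassemble the right-hand side as $\bigcup_{u'\in\Gamma_l(\pi(w))}\Gamma_{M_*}(u')=\Gamma_{M_*+l}(\pi(w))$.

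Finally, \eqref{e:3.1} follows by a $k$-fold iteration: starting from $\Gamma_{M_*+k}(w)$, apply the upgraded inclusion once to descend to $\Gamma_{M_*+k-1}(\pi(w))$, again to $\Gamma_{M_*+k-2}(\pi^2(w))$, and so on; after $k$ steps one reaches $\Gamma_{M_*}(\pi^k(w))$, the levels $M_*+k-1,\dots,M_*$ used along the way remaining $\ge M_*$ throughout, so that the upgraded inclusion applies legitimately at every stage. The main (and essentially only) subtlety is precisely this upgrading step: Assumption \ref{assump24}(ii) lowers the index by one only at the single level $M_*+1$, and one must verify that the gain persists at every larger level before it can be iterated, which is exactly where the monotonicity from (P1) and the concatenation identity enter. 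The degenerate case $|w|<k$ is harmless, since then both sides collapse to $\{\emptyset\}$.
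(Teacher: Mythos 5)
Your proof is correct and rests on exactly the same two ingredients as the paper's: the inclusion $\pi\big(\Gamma_i(w)\big)\subset \Gamma_i\big(\pi(w)\big)$ deduced from the monotonicity $K_v\subset K_{\pi(v)}$ in (P1), and the concatenation structure of the sets $\Gamma_M$. The only difference is bookkeeping: you first upgrade Assumption \ref{assump24}(ii) to the one-step inclusion $\pi\big(\Gamma_{M+1}(w)\big)\subset\Gamma_{M}\big(\pi(w)\big)$ for all $M\ge M_*$ and then chain it $k$ times, whereas the paper proves \eqref{e:3.1} by induction on $k$ directly, combining the same three facts inside the induction step; the mathematical content is identical.
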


\begin{proof}
First, we show that  
\begin{equation}\label{e:3.2}
\pi\big(\Gamma_{i}(w)\big)\subset \Gamma_{i}\big(\pi(w)\big)\quad\hbox{ for any }w\in T\text{ and }i\geq 1.
\end{equation}
In fact, $v\in \pi\big(\Gamma_{i}(w)\big)$ implies that there exist $w_0,w_1,\cdots,w_i\in T_{|w|}$ such that $w_0=w,v=\pi(w_i)$ and $K_{w_j}\cap K_{w_{j+1}}\neq\emptyset$ for  
$j=0,1,\cdots,i-1$. So by 
letting $v_j=\pi(w_j)$ for $j=0,1,\cdots,i$, we see that $v_0=\pi(w)$, $v_i=v$ and $K_{v_j}\cap K_{v_{j+1}}\supset K_{w_j}\cap K_{w_{j+1}}\neq \emptyset$ for $i=0,1,2,\cdots,i-1$, which means that $v\in \Gamma_{i}\big(\pi(w)\big)$. 
	
We prove \eqref{e:3.1} by induction. First note that \eqref{e:3.1}  holds for $k=1$ by Assumption \ref{assump24}(ii). Now suppose  that  \eqref{e:3.1} holds for $k-1$ for some $k\geq 2$.
Then 
\[
\begin{aligned}
\pi^{k}\big(\Gamma_{M_*+k}(w)\big)&=\pi^{k}\big(\bigcup_{v\in \Gamma_{k-1}(w)}\Gamma_{M_*+1}(v)\big)\\
&=\bigcup_{v\in \Gamma_{k-1}(w)}\pi^{k}\big(\Gamma_{M_*+1}(v)\big)\\
&\subset \bigcup_{v\in \Gamma_{k-1}(w)}\pi^{k-1}\big(\Gamma_{M_*}\big(\pi(v)\big)\big)\\
&=\pi^{k-1}\Big(\Gamma_{M_*}\big(\pi(\Gamma_{k-1}(w))\big)\Big)\\
&\subset \pi^{k-1}\Big(\Gamma_{M_*+k-1}\big(\pi(w)\big)\Big)\\
&\subset \Gamma_{M_*}\big(\pi^{k-1}\circ\pi(w)\big)=\Gamma_{M_*}\big(\pi^k(w)\big),
\end{aligned}
\]
where in the third line we used Assumption \ref{assump24}(ii), in the fifth line we used 
\eqref{e:3.2} with $i=k-1$ and in the last line we used the induction hypothesis that the desired formula holds for $k-1$.
\end{proof}\medskip

\begin{proof}[Proof of Theorem \ref{T:2.6}]
Fix $x\in K$, and we pick $\omega\in \Sigma$ such that $x=\bigcap_{i=0}^\infty K_{\omega(i)}$. Let $w_j=\omega(jM_*+j)$ for $j\geq 0$, so 
\begin{equation}\label{eqn31}
\pi^{M_*+1}(w_{j+1})=w_j\quad\hbox{ for }j\geq 0. 
\end{equation}
Let 
\[
A_j=\Gamma_{M_*}(w_j),\ B_j=\Gamma_{2M_*}(w_j)\ \hbox{ and }B^*_j=\Gamma_{2M_*+1}(w_j).
\]
Then  for $j\geq 1$, 
\begin{equation}\label{eqn32} 
 \pi^{M_*+1}(B^*_{j+1}) =\pi^{M_*+1}\big(\Gamma_{2M_*+1}(w_{j+1})\big) 
 \subset \Gamma_{M_*}\big(\pi^{M_*+1}(w_{j+1})\big) 
 =\Gamma_{M_*}(w_j)   =A_{j}, 
 \end{equation}
where we used the definition of $B^*_{j+1}$ 
in the first equality,  \eqref{e:3.1} in the inclusion,    \eqref{eqn31} in the second equality, and  
the definition of $A_j$ in the last equality.  

For $k\geq 1$ and $k(M_*+1)\leq n<(k+1)(M_*+1)$,   define 
\begin{equation}\label{eqn33}
f_n=\sum_{j=1}^k\frac{1}{j}\,f_{n,j},
\end{equation}
where  $f_{n,j}\in l(T_n)$ for $1\leq j\leq k$  are such that $0\leq f_{n,j}\leq 1$, 
\begin{equation}\label{eqn34} 
f_{n,j}|_{T_n\setminus S^{n-(M_*+1)j}(B_j)}=0,\ f_{n,j}
|_{S^{n-(M_*+1)j}(A_j)}=1\,\text{ and }\mcE^n_p(f_{n,j})\leq C_1\sigma^{-n+(M_*+1)j}
\end{equation}
for some $C_1$ independent of $n,j$. 
Such $f_{n,j}$ exists. Indeed, for 
each $w\in A_j$, we can find $f_{n,j,w}$ such that $0\leq f_{n,j,w}\leq 1$ 
\[
f_{n,j,w}|_{S^{n-(M_*+1)j}(w)}=1,\  f_{n,j,w}|_{T_n\setminus S^{n-(M_*+1)j}\big(\Gamma_{M_*}(w)\big)}=0\hbox{ and }\mcE^n_p(f_{n,j,w})\leq c\sigma^{-n+(M_*+1)j},
\]
where $c$ is the constant of \eqref{eq:cond1}. 
Then    $f_{n,j} :=\max_{w\in A_j}f_{n,j,w}$ has the desired properties.  Note that 
the energy estimate of $f_{n,j}$   follows from the energy estimate of $f_{n,j,w},w\in A_j$ and Assumption \ref{assump24}(i):
\[
\mcE^n_p(f_{n,j})\leq \sum_{w\in A_j}\mcE^n_p(f_{n,j,w})\leq \# A_j\,c\sigma^{-n+(M_*+1)j}\leq (L_*+1)^{M_*}c\,\sigma^{-n+(M_*+1)j},
\]
where $\#A_j$ is the cardinality of $A_j$ and $L_*$ is the constant of Assumption \ref{assump24}(i).
 So the last property in  \eqref{eqn34} holds with $C_1=(L_*+1)^{M_*}c$.

Next, we derive an upper bound  for  $\mcE^{p}_n(f_n)$. First,  observe that
\[
\begin{aligned}
\Gamma_1\big(S^{n-(M_*+1)k}(B_k)\big)
&\subset S^{n-(M_*+1)k}\big(\Gamma_1(B_k)\big)
\\&\subset S^{n-(M_*+1)(k-1)}\Big(\pi^{M_*+1}\big(\Gamma_1(B_k)\big)\Big)
\\&=S^{n-(M_*+1)(k-1)}\big(\pi^{M_*+1}(B^*_k)\big)\subset S^{n-(M_*+1)(k-1)}(A_{k-1}),
\end{aligned}
\]
where we used the fact that $B_k^*=\Gamma_1(B_k)$ in the equality, and used \eqref{eqn32} in the last inclusion. 
Hence, as $f_n=\sum_{j=1}^kf_{n,j}$, $\sum_{j=1}^{k-1}j^{-1}f_{n,j}$ is a constant on $S^{n-(M_*+1)(k-1)}A_{k-1}$ and $f_{n,k}$ supports on $S^{n-(M_*+1)k}B_k$, we have
\[
\begin{aligned}
\mcE^{p}_n(f_n)&=\frac12\sum_{w,w'\in T_n, K_w\cap K_{w'}\neq\emptyset\atop \{w,w'\}\cap S^{n-(M_*+1)k}(B_k)\neq\emptyset}\big(f_n(w)-f_n(w')\big)^p+\frac12\sum_{w,w'\in T_n, K_w\cap K_{w'}\neq\emptyset\atop \{w,w'\}\cap S^{n-(M_*+1)k}(B_k)=\emptyset}\big(f_n(w)-f_n(w')\big)^p\\
&=\mcE^p_n(k^{-1} f_{n, k})+\mcE^p_n(\sum_{j=1}^{k-1}j^{-1}f_{n,j}).
\end{aligned}
\]
We can apply the same argument to $\sum_{j=1}^{k-1}j^{-1}f_{n,j}$ 
to conclude  that 
\[
\mcE^{p}_n(f_n)=\sum_{j=1}^k \mcE^{p}_n(j^{-1}f_{n,j})=\sum_{j=1}^k j^{-p}\mcE^{p}_n(f_{n,j})\leq \sum_{j=1}^k j^{-p}C_1\sigma^{-n+(M_*+1)j},
\]
where the inequality is due to \eqref{eqn34}. Hence, 
\begin{equation}\label{eqn35}
\sigma^n\mcE^{p}_n(f_n)\leq  C_1\sum_{j=1}^k j^{-p}C_1\sigma^{(M_*+1)j}\leq C_1\sum_{j=1}^\infty j^{-p}<\infty
\end{equation}
where we use the assumption that $\sigma\leq 1$. 

Next, we define $\hat{f}_n=\sum_{w\in T_n}f_n(w) {\mathbbm 1}_{K_w}$, where ${\mathbbm 1}_{K_w}$ is the indicator  function of  
$K_w$, i.e. ${\mathbbm 1}_{K_w}(x)=1$ if $x\in K_w$ and ${\mathbbm 1}_{K_w}(x)=0$ if $x\notin K_w$. By Assumption \ref{assump24}(iii), we see that $\hat{f}_n\in \mathbb L^p(K,\mu)$ and $P_n\hat{f}_n=f_n$. By \eqref{eqn35}, \cite[Lemma 2.27]{Ki2} (which is a $p$-version of \cite[Lemma 2.12]{KZ} based on the same proof) and \eqref{eq:cond2}, 
\begin{equation}\label{e:3.8}
\sigma^m\mcE^{p}_m(P_m\hat{f}_n)\leq 
C_2\sigma^n\mcE^{p}_n(f_n)\leq 
C_2 C_1\sum_{j=1}^\infty j^{-p}<\infty\quad\hbox{ for each }1\leq m\leq n,
\end{equation}
where $C_2=c(L_*)^{N_E}N_E^{p-1}N_T$ with $L_*$ being the constant in Assumption \ref{assump24}(ii) and $(N_T,N_E)$ being the covering numbers. In addition, by  Assumption \ref{assump24} and the definition of $f_n$ and $\hat{f}_n$, we have 
\begin{align*}
&\hat{f}_n|_{K_{w_k}}\geq \sum_{j=1}^{k}\frac{1}{j}\quad\hbox{ for every }k\geq 1,\,n\geq (M_*+1)k,\\
&\|\hat{f}_n\|_{\mathbb{L}^p(K;\mu)} =\|\sum_{w\in T_n}f_n(w) {\mathbbm 1}_{K_w}\|_{\mathbb{L}^p(K;\mu)}=\|\sum_{i=1}^k\sum_{w\in T_n}f_{n,j}(w) {\mathbbm 1}_{K_w}\|_{\mathbb{L}^p(K;\mu)}\\
&\qquad\qquad\quad\leq \sum_{j=1}^k\|\sum_{w\in T_n}f_{n,j}(w) {\mathbbm 1}_{K_w}\|_{\mathbb{L}^p(K;\mu)}\\
&\qquad\qquad\quad\leq \sum_{j=1}^k\mu(\bigcup_{w\in B_j}K_w)^{1/p}\leq \sum_{j=1}^\infty \big((L_*+1)^{2M_*}\,c_\mu \gamma^{jM_*+j}\big)^{1/p}<\infty\quad\hbox{ for every }n\geq 1,
\end{align*}
where $k$ is the same as in \eqref{eqn33} and we use the fact that $ \sum_{w\in T_n}f_{n,j}(w) {\mathbbm 1}_{K_w}$ takes values in $[0,1]$ and supports on $\bigcup_{w\in B_j}K_w$ by \eqref{eqn34}. 

Finally, we pick a $\mathbb L^p$-weakly convergent subsequence $\hat{f}_{n_i},i\geq 1$, and denote 
its weak limit in $\mathbb L^p(K,\mu)$ by $f$. Then, by \eqref{e:3.8}, we have 
\[
\sigma^m\mcE^{p}_m(P_mf)=\lim\limits_{i\to\infty}\sigma^m\mcE^{p}_m(P_m\hat{f}_{n_i})\leq C_1 C_2\sum_{j=1}^\infty j^{-p}<\infty\quad\hbox{ for each }m\geq 1,
\] 
where the equality holds because $\lim\limits_{i\to\infty}P_m\hat{f}_{n_i}=P_mf$ and $T_m$ is a finite set, and 
\[
f\geq \sum_{j=1}^{k}\frac{1}{j}\quad\mu\hbox{-a.e. on }K_{w_k}\hbox{ for every }k\geq 1,
\]
which is due to the fact that $\int_Ef(x)\mu(dx)=\lim\limits_{i\to\infty}\int_E\hat{f}_{n_i}(x)\mu(dx)\geq (\sum_{j=1}^{k}\frac{1}{j})\,\mu(E)$ for every  Borel $E\subset K_{w_k}$.
Since $\mu (K_{w_k})>0$ for each $k\geq 1$ by Assumption \ref{assump24} (iii), this completes the proof of the proposition as $f$ is unbounded 
on the compact set $K$
and hence $f$ can not have a continuous modification. 
\end{proof}

\vskip 0.2truein

\noindent {\bf Acknowledgements.}\,\,  
The authors are grateful to the referee and Ryosuke Shimizu for helpful comments and suggestions that help to improve  the exposition of this paper.
The research of Shiping Cao is partially supported by a grant from the Simons Foundation Targeted Grant (917524) to the Pacific Institute for the Mathematical Sciences. 
The research of Zhen-Qing Chen is partially supported by  a Simons Foundation fund.
The research of Takashi Kumagai is supported by JSPS
KAKENHI Grant Number 22H00099 and 23KK0050.

 \hskip 0.2truein
 
\end{document}